\newtheorem{proposition}{Proposition}
\newtheorem{lemma}{Lemma}
\newtheorem{theorem}{Theorem}
\newtheorem{question}{Question}
\newtheorem{corollary}{Corollary}
\theoremstyle{definition}
\newtheorem{definition}{Definition}
\newtheorem{conjecture}{Conjecture}
\theoremstyle{remark}
\newtheorem {remark}{Remark}
\DeclareMathOperator{\Aut}{Aut}
\def\Ker{{\rm Ker}\,}
\def\Gr{{\rm Gr}\,}
\def\ZZ{{\mathbb Z}}
\def\BG{{\mathbb G}}
\def\BK{{\mathbb K}}
\def\BG{{\mathbb G}}
\def\BK{{\mathbb K}}
\def\BZ{{\mathbb Z}}
\def\BR{{\mathbb R}}
\def\BN{{\mathbb N}}
\def\BA{{\mathbb A}}
\def\ML{\mathrm{ML}}
\def\HD{\mathrm{HD}}
\def\SAut{\mathrm{SAut}}
\def\LND{\mathrm{LND}}
\title{Modified Derksen invariant}
\thanks{The paper was supported by RSF grant 22-41-02019.}
\author{Ilya Boldyrev}
\email{boldyrev.i.al@gmail.com}
\address{
Lomonosov Moscow State University, Faculty of Mechanics and Mathematics, Department of Higher Algebra, Leninskie Gory 1, Moscow, 119991 Russia;
\linebreak
and
\linebreak
HSE University, Faculty of Computer Science, Pokrovsky Boulevard 11, Moscow, 109028, Russia}
\author{Sergey Gaifullin}
\email{sgayf@yandex.ru}
\address{Moscow Center for Fundamental and Applied Mathematics, Moscow, Russia
\linebreak
and
\linebreak
Lomonosov Moscow State University, Faculty of Mechanics and Mathematics, Department of Higher Algebra, Leninskie Gory 1, Moscow, 119991 Russia;
\linebreak
and
\linebreak
HSE University, Faculty of Computer Science, Pokrovsky Boulevard 11, Moscow, 109028, Russia}
\author{Anton Shafarevich}
\email{shafarevich.a@gmail.com}
\address{Moscow Center for Fundamental and Applied Mathematics, Moscow, Russia
\linebreak
and
\linebreak
Lomonosov Moscow State University, Faculty of Mechanics and Mathematics, Department of Higher Algebra, Leninskie Gory 1, Moscow, 119991 Russia;
\linebreak
and
\linebreak
HSE University, Faculty of Computer Science, Pokrovsky Boulevard 11, Moscow, 109028, Russia}
\subjclass[2020]{Primary 14R05, 14R20 ; Secondary 14A05, 13A50}
\keywords{Derksen invariant, locally nilpotent derivation, affine variety}
\begin{document}
\maketitle

\begin{abstract}
The modified Derksen invariant  $\HD^*(X)$ of an affine algebraic variety $X$ is the subalgebra in $\BK[X]$ generated by kernels of all locally nilpotent derivations of $\BK[X]$ with slices. If there is a locally nilpotent derivation of $\BK[X]$ with a slice then $X \simeq Y \times \BA^1$ where $Y$ is an affine variety. We prove that there are three possibilities: A) $\HD^*(X) = 
\BK[X];$ B) $\HD^*(X) $ is a proper infinitely generated subalgebra; C) $\HD^*(X) = \BK[Y].$ We give examples for each case, and also provide sufficient conditions for the variety $Y$ so that the variety $X = Y \times \BA^1$ belongs to one of the types.
\end{abstract}

\section{Introduction}

Let $\BK$ be an algebraically closed field of characteristic zero and $A$ be a $\BK$-domain. A linear map $\partial\colon A\rightarrow A$ is called a \emph{derivation} if it satisfies the Leibniz rule: $\partial(ab)=a\partial(b)+b\partial(a)$. A derivation is called {\it locally nilpotent} (LND) if for any $a \in A$ there exists $n\in \mathbb{N}$ such that $\partial^n(a)=0$. We denote the set of all LNDs on $A$ by $\LND(A).$ If $A = \BK[X]$ for an irreducible affine variety $X$ then by $\LND(X)$ we mean $\LND(\BK[X]).$

Each locally nilpotent derivation $\partial$ corresponds to an action of the group $\BG_a = (\BK, +)$ on $A$:
$$s\cdot a = \mathrm{exp}(s\partial)(a) = \sum_{i=0}^{+\infty} \frac{s^i\partial^i(a)}{i!},$$
where $s \in \BG_a,\ a\in A.$ Here we mean $\partial^0=\mathrm{id}$. The sum is well-defined since for each $a$ only a finite number of terms are nonzero. If $X$ is an affine algebraic variety and  $A = \BK[X]$ is the algebra of regular functions on $X$ then this $\BG_a$-action on $\BK[X]$ is regular and defines regular $\BG_a$-action on $X$. This correspondence between $\LND(X)$ and regular $\BG_a$-actions on $X$ is bijective, see \cite{Fr}. For $\partial \in \LND(X)$ by $\exp(\partial)$ we mean the respective $\BG_a$-subgroup in $\Aut(X)$ where $\Aut(X)$~--- the group of regular automorphisms of $X$.

The Makar-Limanov invariant is the  subalgebra in $\BK[X]$ equal to the intersection of  kernels of all LNDs on $X$:
$$
\ML(X) = \bigcap_{\partial \in \mathrm{LND}(X)} \mathrm{Ker}\,\partial.
$$
The subalgebra $\ML(X)$ is $\Aut(X)$-invariant, so it is called the \emph{Makar-Limanov invariant} of~$X$.
The Makar-Limanov invariant was introduced in \cite{ML96} and became a useful tool to study affine varieties, see~\cite{ML96, KML, ML01, MJ, P, G}.

The Derksen invariant was introduced in \cite{HD}. It is the subalgebra in $\BK[X]$ generated by kernels of all nonzero LNDs:
$$
\HD(X)=\BK\left[\mathrm{Ker}\,\partial\mid\partial\in \mathrm{LND}(X)\!\setminus\!\{0\}\right].
$$

In \cite{CM} examples of varieties were constructed for which the Makar-Limanov invariant is trivial, but the Derksen invariant is not, and vice versa. Sometimes for varieties, which we would like to investigate, both invariants $\ML(X)$ and $\HD(X)$ are trivial. For example, let us consider a variety $X\cong Y\times \BA^1$ for some irreducible affine variety $Y$. Each LND $\partial$ on $Y$ can be extended to an LND on $X$ by $\partial(u)=0$, where~$u$ is the additional coordinate. Using this it can be shown that if $Y$ admits a nonzero LND, then $\HD(X)=\BK[X]$, and if $\ML(Y)=\BK$, then $\ML(X)=\BK$. So, to deal with such varieties we need other invariants. 

For some LNDs there exist elements, which are called slices.
\begin{definition}
    Let $\partial$ be a locally nilpotent derivation. We say that a function $s\in \BK[X]$ is a \emph{slice} with respect to $\partial$ if $\partial(s) = 1.$ We denote by $\mathrm{LND}^*(X)$ the set of all locally nilpotent derivations of $\BK[X]$ that have a slice.
     
\end{definition}
The well-known Slice theorem, see~\cite[Corollary~1.26]{Fr}, says that if $s$ is a slice of $\partial$ and $A=\mathrm{Ker}\,\partial$, then $s$ is transcendent over $A$ and $\BK[X]=A[s]$. One can show that in this case $A$ is finitely generated algebra. So $X \simeq Y \times \BA^1$, where $Y$ is an affine variety. 

In Section~11.9 of the first edition of the book~\cite{Fr}, Freudenburg suggested to consider the following modifications of Makar-Limanov and Derksen invariants. 
\begin{definition}
    Let $X$ be an affine variety. Suppose that $\mathrm{LND}^*(X) \neq \emptyset$. Then the \emph{modified Makar-Limanov} invariant is the algebra:
    $$\ML^*(X) = \bigcap_{\partial \in \mathrm{LND}^*(X)} \mathrm{Ker}\ \partial \subseteq \BK[X].$$

    The \emph{modified Derksen invariant} of an affine variety $X$ is the subalgebra in $\mathbb{K}[X]$ generated by all kernels of LNDs having slices:
$$\HD^*(X)=\BK[\mathrm{Ker}\,D\mid D\in \mathrm{LND}^*(X)].$$
\end{definition}

It is proved in~\cite{GSh2} that for any variety $Y$ we have $\ML(Y\times\BA^1)=\ML^*(Y\times\BA^1)$. So, $\ML^*(X)$ does not give us a new invariant. In particular, this gives negative answer to the question stated in~\cite[Question~5.9]{DG}, if the product of Koras-Russell cubic and a line and $\BA^4$ can be distinguished by $\ML^*(X)$. In the same paper an example of a variety with trivial $\ML(X)$, $\ML^*(X)$, $\HD(X)$, and a nontrivial $\HD^*(X)$ is given. So, $\HD^*(X)$ is a new invariant. The question whether $\HD^*$ distinguish the product of Koras-Russell cubic and a line and $\BA^4$ remains open, see~\cite[Question~1]{GSh2}.

In this paper we investigate the invariant $\HD^*(X)$. If $\LND^*(X) \neq \emptyset$ then $X \simeq Y \times \BA^1$ so it is enough to consider varieties of the form $Y\times \BA^1.$ We prove that there are only three cases: \begin{itemize}
    \item[A) ] $\HD^*(Y\times\BA^1)=\BK[Y\times\BA^1]$;
    \item[B) ] $\HD^*(Y\times\BA^1)$ is not finitely generated;
    \item[C) ] $\LND(Y) = \{0\}$ and $\HD^*(Y\times\BA^1)=\BK[Y]$.
\end{itemize} 
We say that $Y$ is of type A, B, or C according to these cases. 

Varieties $Y$ with $\LND(Y) = \{0\}$ (type C) are called \emph{rigid}. Many works are devoted to rigid varieties, see, for example, \cite{AG}, \cite{KZ}, \cite{CM2} and \cite{FMJ}. There are many examples of varieties of type $A$ and $B$. We give some in Section \ref{examples}. We also obtain some sufficient conditions when a variety $Y$ belongs or does not belong to one of the types A, B, C (Propositions \ref{pp}-\ref{nni}).

The second author is a Young Russian Mathematics award winner and would like to thank its sponsors and jury.

\section{Preliminaries}

Let $F$ be an abelian group.

\begin{definition}
An algebra $A$ is called {\it $F$-graded} if $$A=\bigoplus_{f\in F}A_f \text{ and } A_fA_g\subset A_{f+g}.$$
\end{definition}

\begin{definition}
A derivation $\partial\colon A\rightarrow A$ is called {\it $F$-homogeneous of degree $f_0\in F$} if for all $a\in A_f$ we have $\partial(a)\in A_{f+f_0}$.
\end{definition}
\noindent

Let $A$ be a finitely generated $\ZZ$-graded algebra.
\begin{lemma}
Let $\partial$ be a derivation, then $\partial=\sum_{i=l}^k\partial_i$, where $\partial_i$ is a homogeneous derivation of degree $i$. 
\end{lemma}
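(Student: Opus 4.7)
The plan is to construct each $\partial_i$ by extracting the $i$-th degree-shifting part of $\partial$, then verify that it is a derivation, and finally invoke finite generation to show that the sum is finite. Concretely, for $a_f \in A_f$ write $\partial(a_f) = \sum_{j \in \ZZ} (\partial(a_f))_j$ with $(\partial(a_f))_j \in A_j$, and define
\[
\partial_i(a_f) := (\partial(a_f))_{f+i} \in A_{f+i}.
\]
Then I would extend $\partial_i$ to all of $A$ by linearity using the decomposition $A = \bigoplus_{f \in \ZZ} A_f$. By construction $\partial_i$ is $\BK$-linear and of degree $i$, and on each $A_f$ we have $\sum_i \partial_i(a_f) = \partial(a_f)$, so $\partial = \sum_i \partial_i$ holds pointwise.

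Next I would check the Leibniz rule for $\partial_i$. Since both sides are bilinear, it is enough to test on homogeneous elements $a_f \in A_f$, $b_g \in A_g$, whose product lies in $A_{f+g}$. The $(f+g+i)$-th homogeneous component of
\[
\partial(a_f b_g) = a_f \partial(b_g) + b_g \partial(a_f)
\]
splits as $a_f\,(\partial(b_g))_{g+i} + b_g\,(\partial(a_f))_{f+i}$, because $a_f$ and $b_g$ are themselves homogeneous and multiplication respects the grading. This is exactly $a_f \partial_i(b_g) + b_g \partial_i(a_f)$, so $\partial_i$ is a derivation.

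The only delicate point, and the one where the finite-generation hypothesis enters, is to show that only finitely many $\partial_i$ are nonzero, so that the expression $\partial = \sum_{i=l}^{k} \partial_i$ makes literal sense. I would fix a finite set of homogeneous generators $x_1, \ldots, x_n$ of $A$, which exists since $A$ is finitely generated and $\ZZ$-graded (one may replace an arbitrary generating set by the homogeneous components of its elements). Each $\partial(x_j)$ has finitely many nonzero homogeneous components, so the set
\[
I = \{ i \in \ZZ : \partial_i(x_j) \neq 0 \text{ for some } j \}
\]
is finite; choose $l \le k$ with $I \subseteq [l,k]$. For any $i \notin [l,k]$, the derivation $\partial_i$ vanishes on every generator $x_j$, hence by the Leibniz rule vanishes on the subalgebra they generate, which is $A$. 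This yields $\partial = \sum_{i=l}^{k} \partial_i$ with each $\partial_i$ homogeneous of degree $i$, and the main obstacle, namely the finiteness of the decomposition, is precisely what the finite generation hypothesis supplies.
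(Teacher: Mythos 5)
Your proposal is correct and follows essentially the same route as the paper: decompose $\partial$ into its degree-shifting components, check Leibniz's rule componentwise on homogeneous elements, and use a finite (homogeneous) generating set to bound the range of nonzero components. Your write-up is somewhat more careful than the paper's (in particular about reducing to homogeneous generators and about why $\partial_i=0$ outside $[l,k]$), but the underlying argument is the same.
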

\begin{proof}
Let $a_1,\ldots,a_m$ be the generators of $A$. Then $\partial (a_j)=\sum_{i=l_j}^{k_j}b_{ij}$, where $b_{ij}\in A_i$. Take $l:=\mathrm{min}\{l_1,\ldots,l_m\}$, $k:=\mathrm{max}\{k_1,\ldots,k_m\}$. Using the Leibniz rule we get
$$\forall s \in \BZ,\ \forall a\in A_s \colon \; \partial (a)\in\bigoplus_{i=l}^k A_{s+i}.$$ Thus, $\partial=\sum_{i=l}^k\partial_i$, where $\partial_i\colon A_s\rightarrow A_{s+i}$ is a linear map. The Leibniz rule for $\partial_s$ follows from the Leibniz rule for $\partial$.
\end{proof}

\begin{remark}
Further, when we write $\partial=\sum_{i=l}^k\partial_i$, we will assume that $\partial_l\neq 0$ and~$\partial_k\neq 0$.
\end{remark}

We need the following lemma. For the proof, we refer to~\cite{Re}.
\begin{lemma}\label{fl}
Let $A$ be a finitely generated $\ZZ$-graded algebra, $\partial\colon A\rightarrow A$ be an~LND. Assume $\partial=\sum_{i=l}^k\partial_i$, where $\partial_i$ is the homogeneous derivation of degree $i$. Then $\partial_l$ and $\partial_k$ are LNDs.
\end{lemma}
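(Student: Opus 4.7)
The plan is to pass local nilpotency from $\partial$ to its extremal homogeneous components by extracting the top-degree (respectively bottom-degree) part of $\partial^n(a)$. Since every element of $A$ is a finite sum of homogeneous elements, it suffices to check local nilpotency of $\partial_k$ and $\partial_l$ on each homogeneous $a \in A_j$, and I will carry this out for $\partial_k$; the case of $\partial_l$ is symmetric.

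The key observation is the following graded decomposition of the iterate. For any $n \geq 1$,
\[
\partial^n(a) \;=\; \sum_{(i_1,\ldots,i_n)\in\{l,\ldots,k\}^n} \partial_{i_1}\partial_{i_2}\cdots\partial_{i_n}(a),
\]
and each summand $\partial_{i_1}\cdots\partial_{i_n}(a)$ lies in $A_{j+i_1+\cdots+i_n}$. Thus the homogeneous component of $\partial^n(a)$ in degree $j+nk$ collects exactly those tuples with $i_1+\cdots+i_n = nk$; since each $i_s \leq k$, this forces $i_s=k$ for all $s$. Consequently the degree-$(j+nk)$ component of $\partial^n(a)$ equals $\partial_k^n(a)$. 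By the very same reasoning the degree-$(j+nl)$ component of $\partial^n(a)$ equals $\partial_l^n(a)$.

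Now since $\partial$ is locally nilpotent, there exists $N$ with $\partial^N(a)=0$. In particular every graded component of $\partial^N(a)$ vanishes, so $\partial_k^N(a)=0$ and $\partial_l^N(a)=0$. This proves local nilpotency on homogeneous elements. For a general $a\in A$, write $a = a_{f_1}+\cdots+a_{f_s}$ with $a_{f_i}\in A_{f_i}$; applying the previous step to each $a_{f_i}$ and taking the maximum of the resulting exponents yields $N$ such that $\partial_k^N(a)=0$, as required.

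The only real subtlety is the extraction of the extremal graded component in the displayed expansion of $\partial^n(a)$, where one must be careful that the range of $i_s$ stays inside $\{l,\ldots,k\}$ so that $nk$ (resp.\ $nl$) is achieved uniquely by the constant tuple; the derivation property of each $\partial_i$ (already guaranteed by the previous lemma) is what allows the expansion in the first place. Beyond that, the argument is purely formal grade bookkeeping and uses the $\ZZ$-grading of $A$ in an essential but elementary way; the hypothesis that $A$ is finitely generated is only needed to legitimize speaking of homogeneous decompositions of arbitrary elements, which is automatic from the grading itself.
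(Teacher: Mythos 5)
Your argument is correct: for homogeneous $a\in A_j$ the component of $\partial^n(a)$ in the extremal degree $j+nk$ (resp.\ $j+nl$) is exactly $\partial_k^n(a)$ (resp.\ $\partial_l^n(a)$), since only the constant tuple attains the extremal sum, so the vanishing of $\partial^N(a)$ forces the vanishing of these components. The paper does not prove the lemma itself but refers to Rentschler's note \cite{Re}; your degree-bookkeeping argument is the standard proof of that result (and, as you note, the finite generation hypothesis plays no role once the finite decomposition $\partial=\sum_{i=l}^k\partial_i$ is assumed), so nothing is missing.
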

\begin{corollary}\label{flz}
If $A$ admits a LND, then $A$ admits a $\ZZ$-homogeneous LND. 
\end{corollary}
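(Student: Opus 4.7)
The plan is to apply Lemma~\ref{fl} to an arbitrary nonzero LND. First I take any nonzero $\partial \in \LND(A)$, which exists by hypothesis. Invoking the decomposition lemma that immediately precedes Lemma~\ref{fl}, I write $\partial = \sum_{i=l}^{k} \partial_i$, where each $\partial_i$ is a $\ZZ$-homogeneous derivation of degree $i$ on the finitely generated $\ZZ$-graded algebra $A$. By the convention stated in the remark, I may assume both extremal components $\partial_l$ and $\partial_k$ are nonzero.

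Next, Lemma~\ref{fl} tells me that these extremal components $\partial_l$ and $\partial_k$ are themselves locally nilpotent. Since $\partial_l \neq 0$ and $\partial_l$ is $\ZZ$-homogeneous of degree $l$ by construction, it serves as the required $\ZZ$-homogeneous LND on $A$ (equally well, one could use $\partial_k$).

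There is no real obstacle in this argument; the corollary is an immediate consequence of Lemma~\ref{fl} combined with the decomposition of a derivation into its graded pieces. The only conceptual point worth noting is that one needs to extract a nonzero extremal homogeneous summand to serve as the witness, which is precisely what the convention in the remark guarantees. Hence $A$ admits a $\ZZ$-homogeneous LND, as claimed.
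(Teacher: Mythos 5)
Your argument is correct and is exactly the intended one: the paper states the corollary without proof as an immediate consequence of Lemma~\ref{fl}, namely decomposing a nonzero LND into $\ZZ$-homogeneous pieces and taking a nonzero extremal component, which Lemma~\ref{fl} guarantees is again locally nilpotent. No gaps; the only implicit point, which you handle, is that ``admits an LND'' should be read as ``admits a nonzero LND'' so that the extracted homogeneous component is nonzero.
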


\begin{lemma}\label{invlem}
    Let $X$ be an affine variety and $U$ is a subspace in $\BK[X]$ which is invariant with respect to all automorphisms of $X$. Then $\partial(U) \subseteq U$ for all $\partial \in \LND(X)$. If $\BK[X] = \bigoplus_i\BK[X]_i$ is a $\BZ$-graded algebra then 
    $U = \bigoplus_i U\cap \BK[X]_i.$ 
\end{lemma}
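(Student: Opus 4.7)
\textbf{Proof plan for Lemma \ref{invlem}.} The strategy is to exploit one-parameter subgroups in $\Aut(X)$: the $\BG_a$-subgroup $\exp(s\partial)$ for the first claim, and the $\BK^\times$-action determined by the grading for the second. Invariance of $U$ under these automorphisms gives a family of elements of $U$ whose coefficients, by a Vandermonde-type argument, must themselves lie in $U$.

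For the first assertion, fix $\partial\in\LND(X)$ and $a\in U$, and choose $n$ with $\partial^{n+1}(a)=0$. For every $s\in\BK$, the automorphism $\exp(s\partial)\in\Aut(X)$ maps $a$ to
$$\exp(s\partial)(a)=\sum_{i=0}^{n}\frac{s^i}{i!}\,\partial^i(a)\in U.$$
Evaluating at $n+1$ distinct values $s_0,\dots,s_n\in\BK$ and inverting the Vandermonde matrix $(s_j^i)_{0\le i,j\le n}$, each element $\partial^i(a)$ becomes a $\BK$-linear combination of the elements $\exp(s_j\partial)(a)\in U$, and hence lies in $U$. In particular $\partial(a)\in U$.

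For the second assertion, a $\BZ$-grading $\BK[X]=\bigoplus_i\BK[X]_i$ induces a regular $\BK^\times$-action on $X$ by $t\cdot b=t^i b$ for $b\in\BK[X]_i$; the corresponding automorphisms preserve $U$ by hypothesis. Given $a\in U$, write $a=a_{i_1}+\cdots+a_{i_n}$ with $a_{i_k}\in\BK[X]_{i_k}$ and $i_1<\cdots<i_n$. Then $t\cdot a=\sum_k t^{i_k}a_{i_k}\in U$ for every $t\in\BK^\times$. The main subtlety is that when the integers $i_k$ are not consecutive and may include negative values, the matrix $(t_j^{i_k})$ for arbitrary $t_j$ is not literally Vandermonde. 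This is handled by a reparametrization: choose $t_j=\lambda^{j-1}$ for $j=1,\dots,n$ and pick $\lambda\in\BK^\times$ so that $\lambda^{i_1},\dots,\lambda^{i_n}$ are pairwise distinct, which is possible since $\BK$ is algebraically closed of characteristic zero and the $i_k$ are distinct. Then $(t_j^{i_k})=(\lambda^{(j-1)i_k})$ is the Vandermonde matrix in the distinct entries $\lambda^{i_k}$, hence invertible, and each $a_{i_k}$ is expressed as a $\BK$-linear combination of the elements $t_j\cdot a\in U$. Thus $a_{i_k}\in U\cap\BK[X]_{i_k}$, which gives the direct sum decomposition $U=\bigoplus_i U\cap\BK[X]_i$.
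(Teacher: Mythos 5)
Your proof is correct and follows essentially the same route as the paper's: invariance under the one-parameter subgroups $\exp(s\partial)$ and the $\BK^\times$-action induced by the grading, followed by extracting the individual terms. The paper leaves the extraction step implicit (``since $\BK$ is an infinite field''), whereas you spell out the Vandermonde argument, including the reparametrization needed for non-consecutive or negative weights; this is a welcome but not essential elaboration.
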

\begin{proof}
    Let $\partial \in \LND(X)$ and $f \in U$. Then 
    $$\exp(s\partial)(f) = f + s\partial(f) + \frac{s^2\partial^2(f)}{2} + \ldots + \frac{s^m\partial^m(f)}{m!} \in U$$
    for all $s \in \BK$. Since $\BK$ is an infinite field we obtaind that $\partial^k(f) \in U$ for all $k.$

    Now we suppose that $\BK[X]$ is a $\BZ$-graded algebra: 
    $$\BK[X] = \bigoplus_{i\in \BZ}\BK[X]_i,\ i\in \BZ.$$
    Then we have a $\BK^*$-action on $X$:
    $$t\circ f = t^if,\ t\in \BK^*,\ f\in \BK[X]_i.$$
    The subspace $U$ is invariant with respect to this action. An element $u\in U$ can be represented as
    $$u = \sum_{i\in \BZ} u_i,\ u_i \in \BK[X]_i.$$
    Therefore
    $$t\circ u = \sum_{i\in \BZ} t^iu_i \in U,\ \forall t\in \BK^*.$$
    It implies that $u_i \in U$ for all $i\in \BZ.$
 \end{proof}

\section{Main results}

The main result of this paper is the following theorem.

\begin{theorem}\label{infgen} Let $X$ be an affine irreducible variety. Suppose that $X \simeq Y \times \BA^1,$ where $Y$ is an affine variety with $\LND(Y) \neq \{0\}.$ Suppose that $\HD^*(X) \neq \BK[X].$ Then $\HD^*(X)$ is not finitely generated.

\end{theorem}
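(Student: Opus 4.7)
The plan is to use the natural $\BK^*$-grading of $X = Y \times \BA^1$ to decompose $A := \HD^*(X)$, and then under the assumption that $A$ is finitely generated, to use two different families of automorphisms of $X$ to show that either $A = \BK[X]$ or $\LND(Y) = \{0\}$. This yields the contrapositive of the theorem.

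Under the $\BK^*$-action $t\cdot(y,u) = (y,tu)$, $\BK[X] = \bigoplus_{n\geq 0}\BK[Y]u^n$; since $A$ is $\Aut(X)$-invariant, Lemma~\ref{invlem} gives $A = \bigoplus_{n \geq 0} I_n u^n$ for ideals $I_n \subseteq \BK[Y]$ with $I_0 = \BK[Y]$ (noting $\BK[Y] = \Ker\partial_u \subseteq A$), $I_i I_j \subseteq I_{i+j}$, and $I_{n+1} \subseteq I_n$ (from $\partial_u(A) \subseteq A$, again by Lemma~\ref{invlem}). Now fix $\partial_0 \in \LND(Y)\setminus\{0\}$ and $k \geq 1$. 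The derivation $u^k\partial_0$ of $\BK[X]$ is locally nilpotent (since $u \in \Ker\partial_0$), so $\exp(u^k\partial_0) \in \Aut(X)$; applying it to $f \in \BK[Y] \subseteq A$ yields
$$\exp(u^k\partial_0)(f) = \sum_{j \geq 0}\frac{u^{kj}\partial_0^j(f)}{j!} \in A.$$
The $j$-th summand is homogeneous of degree $kj$, and since $A$ is graded, it lies in $A_{kj} = I_{kj}u^{kj}$. Therefore $\partial_0^j(f) \in I_{kj}$ for all $j,k \geq 1$; taking $j = 1$ we obtain the key inclusion
\begin{equation}
\partial_0(\BK[Y]) \subseteq I_n \qquad \text{for all } n \geq 1 \text{ and } \partial_0 \in \LND(Y). \label{eq:lnd}
\end{equation}

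Now suppose $A$ is finitely generated. Since $A$ is graded and contains $\BK[Y]$, we can write $A = \BK[Y][f_1, \ldots, f_m]$ with $f_j = h_j u^{i_j}$ homogeneous of positive degree; set $C := \max_j i_j$ and $H := (h_1, \ldots, h_m)$. The automorphism $\tau_\psi := \exp(\psi\partial_u) \in \Aut(X)$ sends $(y,u)\mapsto(y,u+\psi)$ for each $\psi \in \BK[Y]$, so $h_j(u+\psi)^{i_j} \in A$; its homogeneous component of degree $1$ is $i_j h_j \psi^{i_j - 1} u$, hence $h_j\psi^{i_j-1} \in I_1$. Taking $\psi = 1$ yields $h_j \in I_1$ for every $j$, so $H \subseteq I_1$; combined with the obvious reverse inclusion $I_1 = (h_j : i_j = 1) \subseteq H$ read off from the definition of $A_1$, we conclude $H = I_1$. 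On the other hand, $I_n$ is generated as a $\BK[Y]$-ideal by the products $\prod_j h_j^{k_j}$ with $\sum k_j i_j = n$; since each such exponent vector satisfies $\sum k_j \geq \lceil n/C \rceil$, every such generator lies in $H^{\lceil n/C\rceil}$, giving
$$I_n \subseteq H^{\lceil n/C \rceil} \qquad \text{for all } n \geq 1.$$

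We split into two cases. If $H = I_1 = \BK[Y]$, then $I_n \supseteq I_1 I_{n-1} = I_{n-1}$ combined with $I_n \subseteq I_{n-1}$ forces $I_n = I_{n-1}$ inductively, so all $I_n = \BK[Y]$ and $A = \BK[X]$. Otherwise $H$ is a proper ideal of the Noetherian domain $\BK[Y]$; Krull's intersection theorem gives $\bigcap_k H^k = 0$, whence $\bigcap_n I_n = 0$, and by (\ref{eq:lnd}) this means $\partial_0 = 0$ for every $\partial_0 \in \LND(Y)$, contradicting $\LND(Y) \neq \{0\}$. The main difficulty is the identification $H = I_1$ via the translations $\tau_\psi$; without it, the Krull bound $I_n \subseteq H^{\lceil n/C \rceil}$ could not be linked to (\ref{eq:lnd}).
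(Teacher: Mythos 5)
Your proof is correct and follows essentially the same route as the paper: the same graded decomposition $\HD^*(X)=\bigoplus_n I_nu^n$ into a descending chain of ideals, the same key inclusion $\partial_0(\BK[Y])\subseteq I_n$ obtained from the derivations $u^k\partial_0$, and the same use of finite generation plus Krull's intersection theorem to force $\bigcap_n I_n=\{0\}$ and hence $\LND(Y)=\{0\}$. Your intermediate bound $I_n\subseteq H^{\lceil n/C\rceil}$ with $H=I_1$ is a somewhat more carefully justified version of the paper's step $J_{ir+1}\subseteq (J_1)^i$, but the argument is the same in substance.
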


\begin{proof}
    
We have $\BK[X] = \BK[Y][u]$ for some element $u\in \BK[X].$ There is a natural $\BZ$-grading on $\BK[X]$:
$$\BK[X] = \bigoplus_i \BK[X]_i,\ \BK[X]_i = \langle u^if \mid f \in \BK[Y]\rangle.$$
The derivation $\frac{\partial}{\partial u}$ have a slice $u$ so $\frac{\partial}{\partial u} \in \LND^*(X).$ The kernel of $\frac{\partial}{\partial u}$ is $\BK[Y]$, hence $\BK[Y] \subseteq \HD^*(X).$ Since $\HD^*(X) \neq \BK[X]$ we have $u \notin \HD^*(X).$

We consider the subspaces
$$J_i = \{f \in \BK[Y] \mid u^if \in \HD^*(X)\}.$$ 
Since $\BK[Y] \subseteq \HD^*(X)$ the subspaces $J_i$ are ideals in $\BK[Y].$ 

By Lemma \ref{invlem} we have $\frac{\partial}{\partial u}(\HD^*(X)) \subseteq \HD^*(X).$ So if $f \in J_{i+1}$ then $$\frac{\partial}{\partial u}(fu^{i+1}) = (i+1)fu^{i} \in \HD^*(X)$$ 
and $f \in J_i.$ So we have $J_{i+1} \subseteq J_{i}.$ Note that the subspace $J_1 \neq \BK[Y].$ Otherwise $u\in \HD^*(X)$ and $\HD^*(X) = \BK[X].$

Now suppose $\HD^*(X)$ is finitely generated. Then one can choose a finite set of homogeneous generators $A$ of $\HD^*(X)$. We choose the maximal $r$ such that $A\cap (u^rJ_r) \neq \emptyset.$
Then $J_{ir} \subseteq (J_1)^i$ for all $i\in \BN.$ So 
$$\bigcap_i J_i \subseteq \bigcap_i (J_1)^i.$$

The ideal $J_1$ is proper ideal in $\BK[Y]$. So there is a point $z\in Y$ such that $f(z) = 0$ for all $f \in J_1.$ Let $m_z$ be the ideal of functions in $\BK[Y]$ that vanish at $z$. Then $J_1 \subseteq m_z.$ Denote by $\overline{m}_z$ the maximal ideal in the local ring $\mathcal{O}_z(Y).$  Then $m_z \subseteq \overline{m}_z.$ So we have
$$\bigcap_i J_i \subseteq \bigcap_i (J_1)^i \subseteq \bigcap_i m_z^i \subseteq \bigcap_i \overline{m}_z^i = \{0\}.$$
The last equation is true due to Proposition 6.4 of appendix in \cite{Shaf}.

Consider an ideal $I = (\mathrm{Im}\ \partial \mid \partial \in \LND(Y))$ in $\BK[Y].$ If $\partial \in \LND(Y)$ we can extend $\partial$ to an LND of $\BK[X]$ by $\partial(u)=0$. Then $u^i\partial \in \LND(X).$ So if $f = \partial(g)$, where $g \in \BK[Y] \subseteq \HD^*(X)$ then $u^i\partial(g) = fu^i \in \HD^*(X).$ It implies that $I \subseteq J_i$ for all $i.$ So we obtain a contradiction:
$$\{0\} \neq I \subseteq \bigcap J_i = \{0\}.$$

\end{proof}

\begin{remark}
    Note that in the proof of Theorem~\ref{infgen} we use only the following properties of $\HD^*(X)$: 
    \begin{enumerate}
        \item $\HD^*(X)$ is $\partial$-invariant for every $\partial\in\LND(X)$;
        \item $\BK[Y]\subseteq \HD^*(X)$;
        \item $\HD^*(X)$ is a homogeneous subalgebra in $\BK[X]$
    \end{enumerate}
    So, in conditions of Theorem~\ref{infgen}, i.e. when $X=Y\times\BA^1$ and $Y$ is not rigid, each subalgebra that sutisfies these conditions is either $\BK[X]$ or infinitely generated. 
\end{remark}

\begin{corollary}\label{alt}
    Let $X$ be an affine irreducible variety and $X = Y \times \BA^1$, where $Y$ is an affine irreducible variety. Then one of the following holds:
    \begin{enumerate}
        \item[(A)] $\HD^*(X) = \BK[X];$
        \item[(B)] $\HD^*(X)$ is a infinitely generated subalgebra;
        \item[(C)] $Y$ is rigid and $\HD^*(X) = \BK[Y]$.
    \end{enumerate}
\end{corollary}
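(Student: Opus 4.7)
The plan is to split on whether $Y$ is rigid, reducing one case directly to Theorem~\ref{infgen} and treating the other by a classification of LNDs on $Y\times\BA^1$.

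If $\LND(Y)\neq\{0\}$, Theorem~\ref{infgen} immediately yields the alternative: either $\HD^*(X)=\BK[X]$ (case (A)), or $\HD^*(X)$ is not finitely generated (case (B)). So the real work is case (C): when $Y$ is rigid, prove $\HD^*(X)=\BK[Y]$. The inclusion $\BK[Y]\subseteq\HD^*(X)$ is immediate from $\partial/\partial u\in\LND^*(X)$, whose kernel equals $\BK[Y]$. For the reverse inclusion I would prove the stronger statement that every nonzero $\partial\in\LND(X)$ has the form $a\,\partial/\partial u$ with $a\in\BK[Y]$, whose kernel is exactly $\BK[Y]$.

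To obtain this classification, endow $\BK[X]=\BK[Y][u]$ with the $\BN$-grading $\deg u=1$, $\deg\BK[Y]=0$, and decompose $\partial=\sum_{i=l}^{k}\partial_i$ into homogeneous components of degree $i\geq -1$. By Lemma~\ref{fl}, both $\partial_l$ and $\partial_k$ are themselves LNDs. A homogeneous derivation $\delta$ of degree $i\geq 0$ acts on $f\in\BK[Y]$ as $\delta(f)=u^i D_i(f)$ for a derivation $D_i$ of $\BK[Y]$; local nilpotency forces $D_i\in\LND(Y)$, and rigidity gives $D_i=0$. With $\delta(u)=bu^{i+1}$ for some $b\in\BK[Y]$, a short induction yields $\delta^n(u)=c_n b^n u^{ni+1}$ with $c_n\neq 0$, so local nilpotency forces $b=0$ and hence $\delta=0$. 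Thus no nonzero homogeneous LND of degree $\geq 0$ exists, so $\partial_l$ and $\partial_k$ must both have degree $-1$, forcing $\partial=\partial_{-1}$ of the claimed form.

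The main obstacle I anticipate is the Euler-type case $i=0$, where $D_0=0$ but $\delta(u)=bu$ could a priori still be an LND; iterating $\delta$ on powers of $u$ and using that $\BK[Y]$ is a domain rules this out. One should also take care that the grading is $\BN$-valued rather than $\BZ$-valued, which automatically kills any hypothetical component of degree $<-1$ on $\BK[Y]$ and keeps the range of relevant degrees finite.
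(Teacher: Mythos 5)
Your reduction coincides with the paper's: when $\LND(Y)\neq\{0\}$ both proofs simply invoke Theorem~\ref{infgen}, so the only real difference is in the rigid case. There the paper finishes in one line by quoting Makar-Limanov's theorem (\cite[Theorem~2.24]{Fr}) that $Y\times\BA^1$ is semi-rigid when $Y$ is rigid, so every nonzero LND of $X$ has kernel $\Ker\frac{\partial}{\partial u}=\BK[Y]$; you instead re-derive this special case by classifying all LNDs of $\BK[Y][u]$ via the grading $\deg u=1$. Your route is more self-contained (it avoids the black box of semi-rigidity) and its skeleton is sound: only degrees $\geq-1$ occur, the extremal components $\partial_l,\partial_k$ are LNDs by Lemma~\ref{fl}, and killing the homogeneous LNDs of degree $\geq 0$ forces $\partial=\partial_{-1}=a\frac{\partial}{\partial u}$. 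This is essentially the computation the paper itself carries out inside Proposition~\ref{nni}.

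One step needs repair, and as written it is mildly circular. For a homogeneous LND $\delta$ of degree $i\geq 1$ you claim both that local nilpotency forces $D_i\in\LND(Y)$ and that $\delta^n(u)=c_nb^nu^{ni+1}$. The second identity already uses $\delta(b)=0$, i.e.\ $D_i=0$; but the first is not immediate while $b$ may be nonzero: writing $\delta^n(f)=g_nu^{ni}$ one finds $g_{n+1}=D_i(g_n)+nib\,g_n$ rather than $g_n=D_i^n(f)$, so $\delta^N(f)=0$ does not directly yield $D_i^N(f)=0$. The clean fix, which the paper uses in Proposition~\ref{nni}, is to note first that $\delta(u)=bu^{i+1}$ is divisible by $u$, hence $\delta(u)=0$ by \cite[Corollary~1.23]{Fr}; then $b=0$, $\delta=u^iD_i$ with $D_i\in\LND(Y)$, and rigidity gives $\delta=0$. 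With the two steps taken in that order your argument is complete.
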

\begin{proof}
    Suppose that $Y$ is rigid.  By theorem due to Makar-Limanov, see~\cite[Theorem~2.24]{Fr} the variety $X=Y \times \BA^1$ is semi-rigid. That is, kernels of all nonzero LNDs coincide. Since $\Ker\,\frac{\partial}{\partial u}=\BK[Y]$, we have $\HD^*(X)=\BK[Y]$. 

    If $Y$ is not rigid, then by Theorem~\ref{infgen} either $\HD^*(X)=\BK[X]$ or it is infinitely generated.
\end{proof}

Let us say that $Y$ is of type $A$, $B$ of $C$ if for this variety the item $A$, $B$ or $C$ respectively of Corollary~\ref{alt} takes place. It is useful to determine the type of a variety. It is useful at least because we can state the following corollary.

\begin{corollary}
If $Y\times \BA^1\cong Z\times \BA^1$, then the types of $Y$ and $Z$ coincide.
\end{corollary}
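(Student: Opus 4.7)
The key observation is that $\HD^*(X)$ is an intrinsic invariant of the variety $X$: its definition refers only to $\LND^*(\BK[X])$, so it is independent of any chosen product decomposition. Consequently, if $X \simeq Y\times\BA^1 \simeq Z\times\BA^1$, then the subalgebra $\HD^*(X) \subseteq \BK[X]$ is literally the same object whether we think of $X$ via the $Y$-decomposition or the $Z$-decomposition. My plan is to leverage this together with the trichotomy of Corollary~\ref{alt}.

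I would argue by cases on the type of $Y$. If $Y$ is of type A, then $\HD^*(X)=\BK[X]$; being a property of $X$ alone, this forces $Z$ also to be of type A. If $Y$ is of type B, then $\HD^*(X)$ is infinitely generated, again an intrinsic property of $X$, and so $Z$ is of type B as well. The only case requiring a little more care is type C, because its definition mentions rigidity of $Y$.

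Assume $Y$ is of type C, so $\HD^*(X) = \BK[Y]$ is a finitely generated proper subalgebra of $\BK[X]$. Apply Corollary~\ref{alt} to $Z$: the variety $Z$ must be of type A, B, or C. Type A is excluded because $\HD^*(X)\neq\BK[X]$, and type B is excluded because $\HD^*(X)$ is finitely generated. Hence $Z$ is forced into type C, which is what we wanted. There is no real obstacle here beyond noticing that the properness and finite generation of $\HD^*(X)$ are intrinsic features of $X$, which is enough to rule out the other two alternatives for $Z$.
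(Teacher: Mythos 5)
Your proof is correct and is essentially the argument the paper intends: the corollary is stated without proof as an immediate consequence of Corollary~\ref{alt}, precisely because the three alternatives (equal to $\BK[X]$, infinitely generated, finitely generated and proper) are mutually exclusive intrinsic properties of $\HD^*(X)\subseteq\BK[X]$, independent of the chosen product decomposition. Your case analysis, including the extra care in case C where you use the trichotomy for $Z$ to rule out A and B, is exactly the right way to spell this out.
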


Our goal is to obtain some sufficient conditions for a variety to be of type A or B.

\begin{proposition}\label{pp}
    If the ideal $I$ of $\BK[Y]$ generated by all images of all LNDs on $Y$ coincide with $\BK[Y]$, then $Y$ is of type A.  
\end{proposition}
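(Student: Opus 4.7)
\medskip

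\noindent\textbf{Proof proposal.} Set $X=Y\times\BA^1$ and write $\BK[X]=\BK[Y][u]$ with the natural $\BZ$-grading $\BK[X]_i=\BK[Y]\,u^i$. As already noticed in the proof of Theorem~\ref{infgen}, the derivation $\frac{\partial}{\partial u}\in\LND^*(X)$ has kernel $\BK[Y]$, so $\BK[Y]\subseteq \HD^*(X)$. To show that $Y$ is of type~A it therefore suffices to prove $u\in\HD^*(X)$.

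The plan is to exhibit, for every $\partial\in\LND(Y)$ and every $g\in\BK[Y]$, a single element of $\HD^*(X)$ whose degree-$1$ component is $u\,\partial(g)$, and then to extract that component from the $\BZ$-grading. The key auxiliary derivation is
\[
D_\partial := \partial+\frac{\partial}{\partial u},
\]
where $\partial$ is extended to $\BK[X]$ by $\partial(u)=0$. A short check shows $[\partial,\frac{\partial}{\partial u}]=0$, so $D_\partial$ is locally nilpotent, and $D_\partial(u)=1$, so $u$ is a slice of $D_\partial$. Hence $D_\partial\in\LND^*(X)$ and $\Ker D_\partial\subseteq\HD^*(X)$. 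Solving the recursion $f_{i+1}=-\frac{1}{i+1}\partial(f_i)$ for $f=\sum f_i u^i\in\Ker D_\partial$ shows
\[
\Ker D_\partial=\Bigl\{\,\exp(-u\partial)(g)=\sum_{i\geq 0}\frac{(-u)^i}{i!}\partial^i(g)\;\Bigm|\;g\in\BK[Y]\,\Bigr\}.
\]
In particular, for every $g\in\BK[Y]$ the element $g-u\,\partial(g)+\sum_{i\geq 2}\frac{(-u)^i}{i!}\partial^i(g)$ lies in $\HD^*(X)$.

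Next I would show that $\HD^*(X)$ is homogeneous with respect to the above $\BZ$-grading. The grading is induced by the $\BK^\times$-action $t\cdot(y,s)=(y,ts)$, whose associated automorphisms preserve $\HD^*(X)$ because any $\sigma\in\Aut(X)$ conjugates $\LND^*(X)$ to itself and permutes the generating kernels. Applying Lemma~\ref{invlem} to the subspace $\HD^*(X)$ yields
\[
\HD^*(X)=\bigoplus_{i\geq 0}\bigl(\HD^*(X)\cap\BK[X]_i\bigr).
\]
Extracting the degree-$1$ part of the element displayed above gives $u\,\partial(g)\in\HD^*(X)$ for every $\partial\in\LND(Y)$ and every $g\in\BK[Y]$.

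Finally, because $\BK[Y]\subseteq\HD^*(X)$ and $\HD^*(X)$ is an algebra, multiplying the relations $u\,\partial(g)\in\HD^*(X)$ by elements of $\BK[Y]$ yields $u\cdot I\subseteq\HD^*(X)$, where $I$ is the ideal of $\BK[Y]$ generated by the images of all LNDs on $Y$. By hypothesis $I=\BK[Y]$, so in particular $u\cdot 1=u\in\HD^*(X)$, and therefore $\HD^*(X)=\BK[Y][u]=\BK[X]$, i.e., $Y$ is of type~A. The only step that requires any real care is the verification that $D_\partial$ is an LND with slice $u$ and the computation of its kernel; once these are in hand the homogeneity argument makes the rest automatic.
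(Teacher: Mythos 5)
Your proof is correct, but the mechanism by which you produce the elements $u\,\partial(g)$ is genuinely different from the paper's. The paper extends $\partial\in\LND(Y)$ to $X$ by $\partial(u)=0$, observes that $u\partial$ is again an LND of $X$ (one \emph{without} a slice), applies it to $g\in\BK[Y]\subseteq\HD^*(X)$, and invokes the first half of Lemma~\ref{invlem} ($\HD^*(X)$ is stable under every LND because it is $\Aut(X)$-invariant) to conclude $u\,\partial(g)\in\HD^*(X)$; this is recorded as $I\subseteq J_i$ inside the proof of Theorem~\ref{infgen}, and Proposition~\ref{pp} is then a one-line corollary. You instead build the LND with slice $D_\partial=\partial+\frac{\partial}{\partial u}$, identify its kernel as $\exp(-u\partial)(\BK[Y])$ via the recursion $f_{i+1}=-\frac{1}{i+1}\partial(f_i)$, and use the second half of Lemma~\ref{invlem} (homogeneity of $\HD^*(X)$ for the $u$-grading) to project out the degree-one component $-u\,\partial(g)$. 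Your route has the merit of working entirely with elements of $\LND^*(X)$ --- it exhibits explicit generators of $\HD^*(X)$ coming straight from the definition, rather than relying on stability of $\HD^*(X)$ under sliceless derivations --- at the cost of computing a kernel and invoking homogeneity. The endgame is identical: $u\cdot I\subseteq\HD^*(X)$, so $I=\BK[Y]$ forces $u\in\HD^*(X)$ and hence $\HD^*(X)=\BK[X]$. The individual steps you flag (commutativity of $\partial$ and $\frac{\partial}{\partial u}$, hence local nilpotency of their sum; $D_\partial(u)=1$; the kernel computation; $\Aut(X)$-invariance of $\HD^*(X)$ via conjugation of slices) all check out.
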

\begin{proof}
    In the proof of Theorem~\ref{infgen} we have obtained $I\subseteq J_i$ for all $i$. If $I=\BK[Y]$, then $J_i=\BK[Y]$ for all $i$. That is, $\HD^*(Y\times \BA^1)=\BK[Y\times \BA^1]$.
\end{proof}
\begin{corollary}\label{ba1}
If $Y=V\times \BA^1$ for a variety $V$, then $Y$ is of type A.
\end{corollary}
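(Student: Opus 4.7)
The plan is to apply Proposition~\ref{pp} directly. Write $\BK[Y] = \BK[V][v]$, where $v$ is the coordinate on the $\BA^1$ factor. Then the partial derivative $\frac{\partial}{\partial v}$ is a locally nilpotent derivation on $\BK[Y]$, and it has a slice, namely $v$ itself, since $\frac{\partial}{\partial v}(v) = 1$. In particular, $1$ lies in the image of this LND.

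Therefore the ideal $I \subseteq \BK[Y]$ generated by all images of all LNDs on $Y$ contains $1$, so $I = \BK[Y]$. By Proposition~\ref{pp} this already forces $Y$ to be of type A, completing the proof.

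The argument is essentially immediate from Proposition~\ref{pp}; there is no real obstacle, since the presence of the $\BA^1$ factor supplies an LND whose image is all of $\BK[Y]$. The only thing worth noting is that one does not even need to use that the LND on $Y$ extends trivially to $Y \times \BA^1$ (which was the mechanism used inside Proposition~\ref{pp}): it is enough to know that the generating ideal $I$ is improper.
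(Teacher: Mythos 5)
Your proof is correct and matches the paper's argument exactly: the paper likewise observes that $\BK[Y]$ admits an LND with a slice (the derivative along the $\BA^1$ factor), so $1\in I$, and then invokes Proposition~\ref{pp}. No differences worth noting.
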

\begin{proof}
In this condition $\BK[Y]$ admits an LND with a slice, i.e. $1\in I$.
\end{proof}
Each $\BG_a$-action on $Y$ defines a subgroup in $\Aut(Y)$, which is called  $\BG_a$-subgroup. Following \cite{AFKKZ}, we call the subgroup in $\Aut(Y)$ generated by all $\BG_a$-subgroups by the subgroup of {\it special automorphisms} and we denote it by $\SAut(Y)$. Note that the condition of Proposition~\ref{pp} can be reformulated in terms of $\SAut(Y)$-action on $Y$.
\begin{proposition}\label{ppp}
    Suppose there are no $\SAut(Y)$-fixed points on $Y$. Then $Y$ is of type A. 
\end{proposition}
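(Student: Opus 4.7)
The plan is to derive the conclusion from Proposition~\ref{pp}: it suffices to show that the hypothesis of no $\SAut(Y)$-stable points forces the ideal $I \subseteq \BK[Y]$ generated by all images of all LNDs on $Y$ to be the whole of $\BK[Y]$.

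First I would characterize fixed points of a single $\BG_a$-action in terms of its LND. For $\partial \in \LND(Y)$ and $y \in Y$, the point $y$ is fixed by $\exp(s\partial)$ for all $s$ if and only if $\partial(f)(y) = 0$ for every $f \in \BK[Y]$. The forward direction follows by differentiating the identity $\exp(s\partial)(f)(y) = f(y)$ in $s$ at $s = 0$. The reverse direction uses that if $\partial(f)(y) = 0$ for every $f$, then applying this to $\partial^{k-1}(f)$ gives $\partial^k(f)(y) = 0$ for all $k \geq 1$, so every term beyond the zeroth in the series $\exp(s\partial)(f)(y)$ vanishes.

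Combining this across all LNDs, a point $y \in Y$ is $\SAut(Y)$-stable precisely when it is fixed by every $\BG_a$-subgroup of $\Aut(Y)$, which by the previous step is the condition that $\partial(f)(y) = 0$ for every $\partial \in \LND(Y)$ and every $f \in \BK[Y]$. This is exactly the condition $y \in V(I)$. Therefore the hypothesis of the proposition is equivalent to $V(I) = \emptyset$, and by Hilbert's Nullstellensatz (applicable since $\BK$ is algebraically closed) this forces $I = \BK[Y]$. An application of Proposition~\ref{pp} then yields that $Y$ is of type A.

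I do not expect a real obstacle. The only mildly nontrivial ingredient is the pointwise characterization of fixed points of a $\BG_a$-action via its generating LND, which is standard; the rest is a one-line Nullstellensatz bridge between the hypothesis and the criterion already established in Proposition~\ref{pp}.
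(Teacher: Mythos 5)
Your proof is correct and follows essentially the same route as the paper: both reduce to Proposition~\ref{pp} by observing that the points of $V(I)$ are exactly the $\SAut(Y)$-stable points (the paper phrases this contrapositively, picking a maximal ideal containing a proper $I$ and showing its zero is fixed by every $\exp(\partial)$, while you argue directly via the Nullstellensatz). The fixed-point characterization you spell out is the same exponential-series computation the paper performs.
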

\begin{proof}
    If $Y$ is not of type A, then the ideal $I$ is proper. Therefore, $I$ is contained in a maximal ideal $m$. Let $y\in Y$ be the zero point of $m$. Let $\varphi=\exp \partial$ for some LND $\partial$ of $\BK[Y]$. Let us take $f\in \BK[Y]$. Then 
    $$
    \varphi(f)(y)=\left(f+\partial(f)+\frac{\partial^2(f)}{2!}+\ldots\right)(y)=f(y).
    $$
    This implies $\varphi(y)=y$. So, $y$ is a $\SAut(Y)$-fixed point.
\end{proof}

Now let us give a sufficient condition for a variety to be of type B or C. First of all let us give this condition in terms of $X$.
\begin{proposition}\label{suf}
    Suppose there is a point $y\in Y$ such that the line $Z = \{y\}\times \BA^1 \subseteq X = Y \times \BA^1$ is $\SAut(X)$-invariant. Then $Y$ is not of type A. 
\end{proposition}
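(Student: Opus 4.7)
The plan is to use the hypothesis that the line $Z=\{y\}\times\BA^1$ is $\SAut(X)$-invariant to show that every element of $\HD^*(X)$ restricts to a constant function on $Z$. Since the coordinate $u$ on $\BA^1$ restricts to a non-constant function on $Z$, this will imply $u \notin \HD^*(X)$, hence $\HD^*(X)\neq\BK[X]$.

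First, I would observe that if $Z\subseteq X$ is $\SAut(X)$-invariant, then its ideal $I(Z)\subseteq\BK[X]$ is invariant under the whole group $\SAut(X)$. By the argument of Lemma~\ref{invlem} (applied to $I(Z)$ instead of $U$), every $\partial\in\LND(X)$ preserves $I(Z)$, so $\partial$ descends to a derivation $\overline{\partial}$ on the quotient $\BK[X]/I(Z)\cong\BK[Z]=\BK[u]$. Since $\partial$ is locally nilpotent, so is $\overline{\partial}$.

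Next I would classify the possibilities for $\overline{\partial}$. Any LND of $\BK[u]$ has the form $c\frac{d}{du}$ for some $c\in\BK$, because a derivation $p(u)\frac{d}{du}$ with $\deg p\geq 1$ fails to be locally nilpotent on $u$ (a direct computation shows $\partial^n(u)$ grows in degree). Now take any $\partial\in\LND^*(X)$ with slice $s\in\BK[X]$. Restricting the equation $\partial(s)=1$ to $Z$ gives $\overline{\partial}(\overline{s})=1$ in $\BK[u]$, which forces $\overline{\partial}\neq 0$. Hence $\overline{\partial}=c\frac{d}{du}$ with $c\in\BK^{\times}$, and its kernel is $\BK$. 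Therefore, for every $f\in\Ker\partial$ the restriction $f|_Z$ is a constant.

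Since $\HD^*(X)$ is generated as an algebra by the kernels of all $\partial\in\LND^*(X)$, the restriction map sends $\HD^*(X)$ into $\BK\subseteq\BK[u]$. In particular $u|_Z=u$ is not in the image, so $u\notin\HD^*(X)$ and $\HD^*(X)\neq\BK[X]$, which means $Y$ is not of type A. The only delicate point is justifying that $I(Z)$ is preserved by every $\partial\in\LND(X)$; this is a direct translation of the set-theoretic $\SAut(X)$-invariance of $Z$ into the algebraic $\partial$-invariance of $I(Z)$ via the exponential, exactly as in the proof of Lemma~\ref{invlem}, and I do not expect any further obstacle.
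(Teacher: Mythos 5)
Your proposal is correct and follows essentially the same route as the paper: pass to the quotient $\BK[X]/I(Z)\cong\BK[u]$, use the slice to see the induced derivation is a nonzero LND with kernel $\BK$, and conclude that every kernel (hence all of $\HD^*(X)$) lands in the proper subalgebra $\pi^{-1}(\BK)$. The extra details you supply (the $\partial$-invariance of $I(Z)$ via the exponential, and the classification of LNDs of $\BK[u]$) are exactly the steps the paper leaves implicit.
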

\begin{proof}
    Suppose $\partial \in \LND^*(X).$ We denote by $I(Z)\subseteq \BK[X]$ the ideal of all regular functions vanishing on $Z.$ Then $I(Z)$ is invariant with respect to $\partial$. So $\partial$ defines a LND $\delta$ of $\BK[Z] = \BK[X]/I(Z) \simeq \BK[t].$ Note that $\delta$ also have a slice so it is a nonzero LND.  Kernels of all nonzero LNDs of $\BK[t]$ coincide with $\BK$. So $\Ker \delta = \BK$. 

    Denote by $\pi$ the canonical homomorphism 
    $$\pi: \BK[X] \to \BK[Z],\ f \to f + I(Z).$$
    Then $\pi(\Ker \partial) \subseteq \Ker \delta = \BK.$ So $\Ker \partial \subseteq \pi^{-1}(\BK).$ But $\pi^{-1}(\BK)$ is a proper subalgebra in $\BK[X].$ So $\HD^*(X) \neq \BK[X].$

\end{proof}

\begin{proposition}\label{nni}
    Suppose there is a nonempty rigid closed subset $V\subset Y$ such that $V\times \BA^1\subset X=Y\times\BA^1$ is a $\SAut(X)$-invariant set. Then $Y$ is  not of type A. 
\end{proposition}
\begin{proof}
Consider $\partial\in\LND(X)$. Since $V\times \BA^1\subset X$ is a $\SAut(X)$-invariant set, $\partial$ induces an LND $\delta$ of $\BK[V\times \BA^1]=\BK[V][u]$. Let us consider the gradings $\BK[V][u]=\bigoplus\limits_{i\geq 0}\BK[V]u^i$ and $\BK[X]=\BK[Y][u]=\bigoplus\limits_{i\geq 0}\BK[Y]u^i$. 

The algebra $\BK[Y][u]$ generated by $\BK[Y]$ and $u$. So there is no nonzero LNDs with degree $\leq -2$. Then $\partial=\partial_{-1}+\partial_0+\ldots+\partial_k$. Since the ideal $I(V\times \BA^1)$ is $\partial$-invariant and homogeneous, it is $\partial_i$-invariant. So each $\partial_i$ induces a derivation $\delta_i$ on $\BK[V][u]$ and we have
$\delta=\delta_{-1}+\delta_0+\ldots+\delta_k$. Let $l$ be the maximal number such that $\delta_l \neq 0.$ Since $\delta$ is LND, the derivation $\delta_l$ is also a nonzero LND. 

If $l\geq 0$, then $\delta_l(u)$ is divisible by $u$. By \cite[Corollary 1.23]{Fr} we have $\delta_l(u)=0$. 
Hence, $\delta_l=u^l\widetilde{\delta}$, where $\widetilde{\delta}$ is an LND of $\BK[V]$ induced by $\frac{\delta_l}{u^l}$. The conditions of the Proposition implies $\delta_l=0$. This is a contradiction. 

Therefore, $\delta=\delta_{-1}=f\frac{\partial}{\partial u}$. Then for each point $v\in V$ we have $\{v\}\times\BA^1$ is $\exp(\delta)$-invariant in $V\times \BA^1$. Hence, $\{v\}\times\BA^1$ is $\exp(\partial)$-invariant in $X$. So, $\{v\}\times\BA^1$ is $\SAut(X)$-invariant. By Proposition~\ref{suf} the variety $Y$ is not of type A.
\end{proof}

The conditions of Proposition~\ref{nni} can be achieved by geometrical reasons. Let $Y$ be a variety of dimension $n$. For a positive integer $k$ we denote by $Y_k$ the set of $y\in Y$ such that the dimension of the tangent space in $y$ is not less than $k$. The subset $Y_k\times \BA^1 \subseteq X = Y \times \BA^1$ is $\Aut(X)$-invariant. So we obtain the following statement.
\begin{corollary}\label{sinsin}
    Suppose there exists positive integer $k$ such that $Y_k$ is rigid. Then $Y$ is of type B or C. 
\end{corollary}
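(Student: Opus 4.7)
The plan is to reduce directly to Corollary~\ref{nmi} by taking $V = Y_k$. The hypothesis already gives that $V$ is rigid, and $V$ is closed since it is cut out by the vanishing of appropriate minors of the Jacobian matrix. So the only thing left to verify is that $V \times \BA^1 \subset X = Y \times \BA^1$ is $\SAut(X)$-invariant.

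The key observation is that $V \times \BA^1$ is not merely $\SAut(X)$-invariant but in fact $\Aut(X)$-invariant, because it coincides with a set of the form $X_m$ for suitable $m$. More precisely, for a point $(y,t) \in Y \times \BA^1$ the tangent space splits as
$$T_{(y,t)}(Y\times\BA^1) = T_yY \oplus T_t\BA^1,$$
so $\dim T_{(y,t)}X = \dim T_yY + 1$. Taking $m = k+1$, this identification shows $X_{k+1} = Y_k\times\BA^1 = V\times\BA^1$. Since the dimension of the tangent space is preserved by any regular automorphism, $X_{k+1}$ is $\Aut(X)$-invariant, hence a fortiori $\SAut(X)$-invariant.

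Having checked both hypotheses of Corollary~\ref{nmi}, namely that $V = Y_k$ is closed rigid and that $V\times\BA^1$ is $\SAut(X)$-invariant, we conclude that $Y$ cannot be of type A, so $Y$ must be of type B or C. The only technical point to be careful about is the tangent space decomposition for products, but this is standard and poses no genuine obstacle; the rest is a direct application of the previous corollary.
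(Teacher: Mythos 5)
Your proof is correct and follows the same route as the paper: the paper derives Corollary~\ref{sinsin} from Corollary~\ref{nmi} by observing that $Y_k$ is closed and $\Aut$-invariant for geometric reasons, which is exactly your reduction. In fact you supply more detail than the paper does, by making explicit the identification $Y_k\times\BA^1 = X_{k+1}$ via the splitting of tangent spaces of a product, which is the point the paper leaves implicit.
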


\section{Examples}\label{examples}

Let us give some examples of varieties of type A. All of them are examples of varieties satisfying conditions of Proposition~\ref{pp} and~\ref{ppp}.

\begin{enumerate}
\item $V\times \BA^1$ for any affine irreducible variety $V$, see Corollary~\ref{ba1}.
\item Danielewski surfaces $W_n=\{x^ny=f(z)\}\subseteq \BA^3$, $n\geq 1$, where the polynomial $f$ has no multiple roots. There is an LND $\partial$ on $W_n$ given by $\partial(x)=0, \partial(y)=f'(z), \partial(z)=x^n$. Then $f'(z), y^n\in I$, where $I$ is an ideal generated by images of LNDs. Hence, $f(z)=y^nx\in I$ and $1=\gcd(f,f')(z)\in I$. By Proposition \ref{pp}, the variety $W_n$ is of type A.
\item Danielewski varieties of the form $\{x_1^{k_1}\ldots x_m^{k_m}y=f(z)\} \subseteq\BA^{m+2}$, where the polynomial $f$ has no multiple roots. These varieties admits LNDs of the form 
$$\partial(x_i)=0,\qquad \partial(y)=f'(z), \qquad \partial(z)=x_1^{k_1}\ldots x_m^{k_m}.$$
One can check that the corresponding $\BG_a$-actions have no fixed points. 
\item Smooth flexible varieties, i.e. varieties $Y$ with unique $\mathrm{SAut}(Y)$-orbits. Examples of such varieties are smooth affine varieties with actions of reductive groups with an open orbits, see the proof in~\cite{GSh}.
\item Varieties $Y$ with actions of semi-simple algebraic group $G$ without fixed points. Indeed, semi-simple algebraic group is generated by unipotent subgroups. Hense $G\subseteq \SAut(Y)$. This implies that there are no $\SAut(Y)$-fixed points.
\item Let $Z$ be a variety of type A and $V$ be an arbitrary irreducible variety. Then $Y=V\times Z$ is of type A. Indeed, we can extend each LND $\partial$ on $\BK[Z\times \BA^1]$ to $\BK[Z\times \BA^1]\otimes \BK[V]$ by $\partial(f)=0$ for $f\in \BK[V]$. Kernels of such LNDs generate $\BK[Z\times V\times \BA^1]$. 
\item 
Let $Z$ be an irreducible affine variety and $f\in\BK[Z]$ is not a constant. By {\it suspension} over $Z$ we mean the subvariety in $Z\times \BA^2$ given by $uv=f$, where $u$ and $v$ are coordinates on $\BA^2$. LNDs on suspensions were investigated in~\cite{AKZ}, see also~\cite{AZ}. In~\cite{G} the following generalization of this concept was introduced. By {$m$-suspension} with positive integer weights $k_1,\ldots, k_m$ we mean the subvariety $\mathrm{Susp}(Z,f,k_1,\ldots, k_m)$ in $Z\times\BA^m$ given by 
$$
y_1^{k_1}\ldots y_m^{k_m}=f,
$$
where $y_1,\ldots, y_m$ are coordinates on $\BA^m$. Suppose $k_1=1$, the ideal $J$ in $\BK[Z]$ generated by all images of LNDs is not proper, and the ideal $I_f$ in $\BK[Z]$ generated by $f$ and all images of $f$ under all LNDs is not proper. Let us prove that $Y=\mathrm{Susp}(Z,f,k_1,\ldots, k_m)$ is of type A. Indeed, if $\partial$ is an LND of $\BK[Z]$, then we can define an LND $\delta$ of $\BK[Y]$ by $\delta(h)=\partial(h)y_2^{k_2}\ldots y_m^{k_m}$ for $h\in \BK[Z]$ , $\delta(y_1)=\partial(f)$, $\delta(y_2)=\ldots=\delta(y_m)=0$. Now, we see that the ideal $I$ in $\BK[Y]$ generated by all images of LNDs contains $\partial(f)$ and $y_2^{k_2}\ldots y_m^{k_m}$. Since  $y_1^{k_1}\ldots y_m^{k_m}=f$, we have $f\in I$. So, $1\in I_f\subseteq I$. Then by Proposition~\ref{pp}, the variety $Y$ is of type~A.

Note that Danielewski surfaces and Danielewski varieties are particular cases of varieties considered in this item.

\end{enumerate}

Now let us give some examples of varieties of type B.

\begin{enumerate}
\item Let $Y$ be a non-rigid variety. Suppose we have $\mathbb{Z}$-grading of $\BK[Y]$ such that $\BK[Y]_i=\{0\}$ for $i<0$. Assume that $\BK[Y]$ does not allow homogeneous LNDs with negative degrees and for each homogeneous LND $\partial$ of degree $0$, the restriction $\partial$ to $\BK[Y]_0$ is zero. Let us show that $Y$ has type B or C. Denote by $J$ the sum of all positive homogeneous components: $J=\bigoplus_{i>0}\BK[Y]_i$. Then $J$ is an ideal. Denote the zero set of $J$ by $V$. Then $\BK[V]\cong \BK[Y]_0$. Our conditions imply that $V$ is a rigid variety. Let us prove that $V\times\BA^1$ is $\SAut(X)$-invariant, where $X = Y \times \BA^1$. Note that $\BK[X]$ admits $\mathbb{Z}^2$-grading: $\BK[X]_{ij}=\BK[Y]_iu^j$. If $\delta$ is a $\mathbb{Z}^2$-homogeneous LND of degree $(a,b)$ and $b<0$, then $\delta=f\frac{\partial}{\partial u}$ and $a\geq 0$. If $b\geq 0$, then $\delta(u)=0$ and $\delta=u^b\xi$, where $\xi$ is an LND of $\BK[Y]$. Therefore, $a\geq 0$. If $\partial$ is an LND of $\BK[X]$, we can decompose $\partial$ onto homogeneous summands with respect to the first $\mathbb{Z}$-grading: $\partial=\partial_l+\ldots+\partial_k$. Then $\partial_l$ is an LND and hence $l\geq 0$. It implies that the ideal $I(V\times\BA^1)$ is $\SAut(X)$-invariant. 
By Proposition~\ref{nni} the variety $Y$ is of type B.
\item A particular case of the previous item is non-rigid non-degenerate affine toric varieties $Y$ without line factors, i.e. $Y\ncong Z\times \BA^1$. 

Let $Y$ be an affine toric variety and $T$ is a torus which acts effectively on $Y$ with an open orbit. Let $M$ be the lattice of characters of $T$ and $N$ be the dual lattice of one-parameter subgroups in $T$. By $\langle \cdot, \cdot \rangle$ we denote the natural pairing $M\times N \to \mathbb{Z}.$ Denote by $\sigma$ the cone in $N_{\BR} = N \otimes_{\BZ} \BR$ which corresponds to $Y$ and $v_1,\ldots, v_m \in N$~--- primitive integer vectors on rays of $\sigma.$ 

Denote by $\sigma^{\vee}$ the cone in $M\otimes_{\BZ}\BR$ which is dual to $\sigma$. Then 
$$\BK[Y] \simeq \bigoplus_{m \in \sigma^{\vee}\cap M} \BK \chi^m \subseteq \BK[M],$$
where $\chi^m$~--- the character of $T$ which corresponds to $m\in M.$

We can consider the function $\varphi: m\rightarrow\sum\langle m,v_i\rangle$. This function induces a $\mathbb{Z}_{\geq 0}$-grading on $\BK[Y]$ with $\BK[Y]_0=\BK$:
$$\BK[Y]_i = \bigoplus_{m \in \sigma^{\vee}\cap M:\ \varphi(m) = i }\BK \chi^m.$$

Suppose there is a homogeneous LND $\partial$ of negative degree $k<0$ on $Y$. Then
$$\partial = \sum_{m \in M} {\partial_m}$$
where $\partial_m$ are derivations homogeneous with respect to $M$-grading of $\BK[Y]$ and $\varphi(m) = k$. The set $$P = \{ m\in M \mid \partial_m \neq 0\}$$
is finite. If we consider a vertex $p$ of a convex hull of $P$ in $M_{\BR}$ then $\partial_p$ is a LND.

An element $e \in M$ is called a \emph{Demazure root} if there is $i$ such that:
\begin{enumerate}
    \item $\langle e, v_i \rangle = -1;$
    \item $\langle e, v_j \rangle \geq 0$ when $j \neq i.$
\end{enumerate}

If $\partial_p$ is an $M$-homogeneous LND then by \cite[Theorem 2.7]{L} $p$ is a Demazure root. Then
$$\varphi(p)  = \sum_i \langle p, v_i \rangle = k <0.$$
It is possible only when there is $i$ such that $\langle p, v_i\rangle = -1$ and $\langle p, v_j \rangle = 0$ for all $j \neq i.$ But this implies that $Y\cong Z\times \BA^1$.

\item Nonrigid affine cones over smooth projective varieties. Such cones have a unique singular point. Hence, by Corollary~\ref{sinsin} they are of type~B.

\end{enumerate}

Examples of varieties of type C, i.e. rigid varieties are, for example, the following varieties. 
\begin{enumerate}
\item Toral varieties, i.e varieties $Y$ such that $\BK[Y]$ is generated by invertible functions, see for example~\cite{ShT}.
\item The affine Fermat cubic threefold $x^3+y^3+z^3+w^3=0$, see~\cite[Corollary~1.9]{ChPW}.
\item A criterion for non-normal toric variety to be rigid is given in~\cite{BG}. A toric variety is rigid if and only if the regular locus of it coincide with the open orbit of the torus. Note, that by~\cite{AKZ} each non degenerate (i.e. without nonconstant invertible functions) normal toric variety is flexible, and hence, it is not rigid.
\item Some more examples of rigid varieties are given in~\cite{CM2}, \cite{FM}, \cite{FMJ}.
\end{enumerate}

\section{Trinomial varieties}\label{do}

In this section we will consider trinomial varieties.

\begin{definition}\label{defTrin}\cite[Construction 1.1]{HW}

Fix integers $r, n > 0, m \geq 0$ and $q \in \{0, 1\}$. Also, fix a partition 
$$n = n_q + \ldots + n_r, \ n_i >0.$$
For each $i = q, \ldots, r$, fix a tuple $l_i = (l_{i1}, \ldots, l_{in_i})$ of positive integers and define a monomial
$$T_i^{l_i} = T_{i1}^{l_{i1}}\ldots T_{in_i}^{l_{in_i}} \in \mathbb{K}[T_{ij}, S_k | q \leq i \leq r, 1 \leq j \leq n_i, 1 \leq k \leq m].$$

We write $\mathbb{K}[T_{ij}, S_k]$ for the above polynomial ring. Now we define a ring $R(A)$ for some input data $A$. 

\emph{Type 1.} $q = 1, A = (a_1, \ldots, a_r)$ where $a_j \in \mathbb{K}$ with $a_i \neq a_j$ for $i\neq j$. Set $I = \{1, \ldots, r-1\}$ and for every $i \in I$ define a polynomial
$$g_i = T_i^{l_i} - T_{i+1}^{l_{i+1}} - (a_{i+1} - a_i) \in \mathbb{K}[T_{ij}, S_k].$$

\emph{Type 2.} $q = 0,$

$$A = \begin{pmatrix} 

a_{00} & a_{01} & a_{02} & \ldots & a_{0r}\\
a_{10} & a_{11} & a_{12} & \ldots & a_{1r}

\end{pmatrix}$$
is a $2 \times (r+1)$-matrix with pairwise linearly independent columns. Set $I = \{0, \ldots, r-2\}$ and for every $i \in I$ define a polynomial 
$$g_i = \mathrm{det}\begin{pmatrix} 
T_{i}^{l_i} & T_{i+1}^{l_{i+1}} & T_{i+2}^{l_{i+2}} \\
a_{0i} & a_{0i+1} & a_{0i+2} \\
a_{1i} & a_{1i+1} & a_{1i+2}
\end{pmatrix} \in \mathbb{K}[T_{ij}, S_k].$$

For both types we define $R(A) = \mathbb{K}[T_{ij}, S_k]/(g_i \mid i \in I).$ \emph{Trinomial variety} is a variety which is isomorphic to $\mathrm{Spec}\ R(A).$

\end{definition}

A criterion for trinomial variety to be rigid is given in~\cite{EGSh}.

\begin{proposition}\cite[Theorem 4]{EGSh}\label{rigidtrinomial}

Let $Y$ be a trinomial variety of type 1. Then $Y$ is not rigid if and only if one of the following holds:
\begin{enumerate}
\item $m > 0$;
\item There is $b \in \{1,\ldots, r\}$ such that for each $ i\in \{1, \ldots, r\} \setminus \{b\}$ there is $j(i) \in \{1, \ldots, n_i\}$ with $l_{ij(i)} = 1$.

\end{enumerate}

Let $Y$ be a trinomial variety of type 2. Then $Y$ is not rigid if and only if one of the following holds:
\begin{enumerate}

\item $m > 0$;

\item There are at most two numbers $a, b \in \{0,\ldots, r\}$ such that for each $i \in \{0,\ldots, r\} \setminus \{a, b\}$ there is $j(i) \in \{1, \ldots, n_i\}$ with $l_{ij(i)} = 1$. 

\item There are exactly three numbers $a,b,c \in \{0, \ldots, r\}$ such that for each $i \in \{a, b\}$ there is $j(i) \in \{1, \ldots, n_i\}$ with $l_{ij(i)} = 2$ and the numbers $l_{ik}$ are even for all $k \in \{1,\ldots, n_i\}$. Moreover, for each $i \in \{0, \ldots, r\}\setminus\{a,b,c\}$ there is $j(i) \in \{1, \ldots, n_i\}$ with $l_{ij(i)} = 1$.
\end{enumerate}

\end{proposition}

So we obtain that trinomial variety $Y$ is of type C if conditions 1)-2) for type 1 and conditions 1)-3) for type 2 of Proposition \ref{rigidtrinomial} do not hold.

Now suppose that $Y$ is a nonrigid trinomial variety of type 1. If $m>0$ then by Corollary~\ref{ba1} the variety $Y$ is of type~A. If $m = 0$ then for all $1\leq i\leq r$ except may be $i_0$ there exists $j=j(i)$ such that $l_{ij}=1$. We can consider linear combinations of equations that give $Y $, to obtain $T_i^{l_i}-T_j^{l_j}=a_j-a_i$. Since $a_i-a_j\neq 0$, the number of monomials $T_i^{l_i}$ which are zero in some point $y\in Y$ is not more than $1$. 

Let us fix any $j(i_0)$. One can define the following LND $\delta$ on $\BK[Y]$: 
$$
\delta(T_{ij(i)})=\prod_{k\neq i}\frac{\partial T_k^{l_k}}{\partial T_{kj(k)}}
$$
and $\delta(T_{il}) = 0$ if $l \neq j(i).$  For each $y\in Y$ there exists $i$ such that $\delta(T_{ij(i)})(y)\neq 0$. Therefore, $Y$ does not admit $\SAut(Y)$-fixed points. By Proposition~\ref{ppp} the variety $Y$ has type A.

Now suppose that $Y$ is a non-rigid trinomial variety of Type 2. Again, if $m> 0 $ the variety $Y$ is of type A. Suppose $m=0$. Also, we assume that there is no monomial $T_i^{l_i}=T_{i1}$ consisting of one variable. Otherwise, we can decrease $r$. Let us define the following $\mathbb{Q}$-grading: $\deg T_{ij}=\frac{1}{n_i}$. We can multiply all degrees in this grading by an appropriate positive integer number to obtain a $\mathbb{Z}$-grading. Let us denote this grading on~$R(A)$ by~$\eta$. This grading has a natural lifting to a grading $\xi$ on $\BK[T_{uv}]$.
For the grading~$\eta$ we have $\BK[X]_0=\BK$ and all negative components are zeros. We will prove that there are no nonzero LNDs with negative degrees, and then by the result from the previous section,~$X$ is of type B. Note that $\eta$-degrees of all monomials $T_i^{l_i}$ coincide. Let us denote this degree by $d$.    Suppose $\delta$ is a nonzero homogeneous LND of negative degree. Then the image 
$$\delta(T_i^{l_i})=\sum_{j=1}^{n_i} l_{ij}\frac{\partial T_i^{l_i}}{\partial T_{ij}}\delta(T_{ij})$$ 
of the monomial $T_i^{l_i}$ has degree less than $d$. Since $\delta$ is nonzero, there is $T_{ij}\notin\Ker\delta$. By~\cite[Principle~1(a)]{Fr}, the kernel of an LND is factorially closed, therefore, $T_i^{l_i}\notin\Ker\delta$. 
Since $\deg\delta(T_{pq})<\deg T_{pq}<d$, there exists a unique polynomial $f_{pq}\in \BK[T_{uv}]$ with $\xi$-degree less that $\deg T_{pq}$ such, that $\delta(T_{pq})=f_{pq}$ in $R(A)$. 
There is a relation $aT_i^{l_i}+bT_k^{l_k}+cT_s^{l_s}=0$ containing $T_i^{l_i}$. So, we have $a\delta(T_i^{l_i})+b\delta(T_k^{l_k})+c\delta(T_s^{l_s})=0$. This implies the following equality in $\BK[T_{uv}]$:
$$
a\sum_{j=1}^{n_i} l_{ij}\frac{\partial T_i^{l_i}}{\partial T_{ij}}f_{ij}+b\sum_{j=1}^{n_k} l_{kj}\frac{\partial T_k^{l_k}}{\partial T_{kj}}f_{kj}+c\sum_{j=1}^{n_s} l_{sj}\frac{\partial T_s^{l_s}}{\partial T_{sj}}f_{sj}=0.
$$
Since $\sum_{j=1}^{n_i} l_{ij}\frac{\partial T_i^{l_i}}{\partial T_{ij}}f_{ij}\neq 0$, we obtain that there are two polynomials of the form
$\frac{\partial T_i^{l_i}}{\partial T_{ij}}f_{ij}$ and $\frac{\partial T_k^{l_k}}{\partial T_{ku}}f_{ku}$ having a coinciding monomial $M$. Hence, $M$ is divisible by $\frac{\partial T_i^{l_i}}{\partial T_{ij}}\frac{\partial T_k^{l_k}}{\partial T_{ku}}$. So, we have
$$
d>\deg M\geq \deg\frac{\partial T_i^{l_i}}{\partial T_{ij}}+\deg\frac{\partial T_k^{l_k}}{\partial T_{ku}}\geq \frac{\deg T_i^{l_i}}{2}+\frac{\deg T_k^{l_k}}{2}=d.
$$
This give a contradiction.


So we obtain the following proposition.
\begin{proposition}
    Let $Y$ be a nonrigid trinomial variety. Then
    \begin{enumerate}
        \item if $m > 0$ then $Y$ is of type A;
        \item if $Y$ is of Type 1 then $Y$ is of type A;
        \item if $Y$ is of Type 2 and $m= 0$ then $Y$ is of type B.
    \end{enumerate}
\end{proposition}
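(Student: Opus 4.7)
The plan is to address the three items separately, each time appealing to a structural result from Section~3.

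For (1), whenever $m>0$ the defining relations $g_i$ do not involve the variables $S_k$, so $\BK[Y]$ splits as a tensor product with $\BK[S_1]$ and $Y\cong Z\times\BA^1$ for some affine $Z$; Corollary~\ref{ba1} then yields type A immediately. For (2), suppose $Y$ is Type 1 with $m=0$ and non-rigid. By Theorem~\ref{rigidtrinomial} there is a distinguished index $b\in\{1,\ldots,r\}$ such that for every $i\neq b$ some $l_{ij(i)}=1$. After fixing an arbitrary $j(b)$ as well, I would define a derivation on $\BK[Y]$ by
\[
\delta(T_{ij(i)})=\prod_{k\neq i}\frac{\partial T_k^{l_k}}{\partial T_{kj(k)}},
\]
with $\delta=0$ on all other generators; a routine Leibniz check shows $\delta$ kills each relation $T_i^{l_i}-T_{i+1}^{l_{i+1}}-(a_{i+1}-a_i)$ and is locally nilpotent. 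At any point $y\in Y$ the Type 1 relations force at most one of the monomials $T_i^{l_i}(y)$ to vanish, because the constants $a_{i+1}-a_i$ are nonzero; consequently at most one factor in each product vanishes and some $\delta(T_{ij(i)})(y)\neq 0$. Thus the $\SAut(Y)$-orbit structure admits no fixed point, and Proposition~\ref{ppp} delivers type A.

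For (3), let $Y$ be a non-rigid Type 2 trinomial variety with $m=0$. If some $T_i^{l_i}$ reduces to a single variable $T_{i1}$, the corresponding relation $g_{i-1}$ or $g_i$ lets us eliminate $T_{i1}$ and reduce $r$, so I may assume no such degenerate monomial appears. The heart of the argument is then geometric: the origin $p=(0,\ldots,0)$ is the unique point lying on every irreducible component of $Y^{\mathrm{sing}}$. I would verify this by inspecting the Jacobian of $(g_0,\ldots,g_{r-2})$ directly: a point is singular precisely when sufficiently many derivatives $\partial T_i^{l_i}/\partial T_{ij}$ vanish, which, under our assumption, forces some $T_{ij}=0$; the irreducible components of $Y^{\mathrm{sing}}$ are indexed by such vanishing patterns, and only the origin is common to all of them. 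Since every automorphism of $X=Y\times\BA^1$ permutes the components of $Y^{\mathrm{sing}}\times\BA^1$, the line $\{p\}\times\BA^1$ is $\SAut(X)$-invariant. Proposition~\ref{suf} then rules out type A, while non-rigidity of $Y$ rules out type C by Corollary~\ref{alt}, leaving type B.

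The main obstacle is item (3): identifying the irreducible components of the singular locus of a Type 2 trinomial variety and confirming that their common intersection equals $\{p\}$ requires a careful combinatorial analysis of the Jacobian, and is sensitive to which $l_{ij}$ equal $1$. By contrast, the LND construction in (2) and the tensor-factor splitting in (1) are mechanical once the non-rigidity criterion of Theorem~\ref{rigidtrinomial} is granted.
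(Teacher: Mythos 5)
Your proposal follows the paper's proof essentially verbatim: Corollary~\ref{ba1} for $m>0$, the same explicit LND $\delta(T_{ij(i)})=\prod_{k\neq i}\partial T_k^{l_k}/\partial T_{kj(k)}$ combined with Proposition~\ref{ppp} for Type 1 with $m=0$, and the $\SAut(X)$-invariant line $\{p\}\times\BA^1$ through the distinguished singular point together with Proposition~\ref{suf} for Type 2. The one step you flag as delicate --- that the origin is the unique point common to all irreducible components of $Y^{\mathrm{sing}}$ --- is asserted in the paper with no more justification than you give, so your write-up matches it in both route and level of rigor.
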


\section{Conjectures and remarks}

Let us state some conjectures about modified Derksen invariant. 

We do not know any examples of varieties of type A that do not satisfy  conditions of Porpositions~\ref{pp} and~\ref{ppp}. So, let us state a conjecture.
\begin{conjecture}
    The following conditions are equivalent:
    \begin{itemize}
        \item a variety $Y$ is of type A;
        \item the ideal $I$ of $\BK[Y]$ generated by all images of all LNDs on $Y$ coincide with $\BK[Y]$;
        \item there are no $\SAut(Y)$-fixed points on $Y$.
    \end{itemize}
\end{conjecture}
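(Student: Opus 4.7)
\medskip
\noindent
\emph{Proof proposal.} The plan starts by dispatching the equivalence (2)$\iff$(3), which is already essentially present in the paper. If $y$ is $\SAut(Y)$-stable then $\exp(s\partial)(f)(y)=f(y)$ for every $\partial\in\LND(Y)$ and $f\in\BK[Y]$, which forces $\partial(f)(y)=0$, whence $I\subseteq m_y$ and $I$ is proper, proving (2)$\Rightarrow$(3). Conversely, if $I$ is proper it is contained in some $m_y$, so $\partial(g)(y)=0$ for every $g$ and $\partial$; applied inductively to $\partial^k$ this yields $\exp(s\partial)(y)=y$, giving (3)$\Rightarrow$(2). Combined with Proposition~\ref{pp}, which is (2)$\Rightarrow$(1), only (1)$\Rightarrow$(2) remains, and I would attack it in contrapositive form: starting from an $\SAut(Y)$-stable point $y$, produce a proper subalgebra of $\BK[X]=\BK[Y][u]$ containing $\HD^*(X)$.

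The natural target is the line $F:=\{y\}\times\BA^1\subset X$; by Proposition~\ref{suf} it suffices to prove $F$ is $\SAut(X)$-invariant, equivalently that its homogeneous ideal $\mathfrak{m}:=m_y\cdot\BK[X]$ is $D$-invariant for every $D\in\LND(X)$. Decompose $D$ by $u$-degree as $D=D_l+D_{l+1}+\ldots+D_k$ with $l\in\{-1,0,1,\ldots\}$. If $l=-1$ then $D_{-1}=f\frac{\partial}{\partial u}$ for some $f\in\BK[Y]$, and for each $i\geq 0$ one has $D_i(g)=u^i\widetilde{D}_i(g)$ on $g\in\BK[Y]$ for a derivation $\widetilde{D}_i$ of $\BK[Y]$. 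Since $D$ is a derivation and $\mathfrak{m}$ is an ideal generated by $m_y$, $D$-invariance of $\mathfrak{m}$ reduces to $D(m_y)\subseteq\mathfrak{m}$, which in turn reduces to $\widetilde{D}_i(m_y)\subseteq m_y$ for all $i\geq 0$.

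By Lemma~\ref{fl} the extreme summands $D_l,D_k$ are LNDs, so the associated $\widetilde{D}_l$ (when $l\geq 0$) and $\widetilde{D}_k$ are LNDs of $\BK[Y]$ and preserve $m_y$ by $\SAut(Y)$-stability of $y$. The main obstacle is the intermediate range $l<i<k$: each $\widetilde{D}_i$ is merely a derivation and need not be locally nilpotent, so there is no a priori reason for it to fix $m_y$. The plan to overcome this is to exploit the hypothesis that $D$ admits a slice: writing $s=\sum_j s_j u^j$ and expanding $D(s)=1$ gives a recursive system relating the $\widetilde{D}_i$ and the coefficients $s_j$, from which one may hope to deduce, inductively on $u$-degree, that every $\widetilde{D}_i(m_y)\subseteq m_y$. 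An alternative avenue is to conjugate $D$ to $\frac{\partial}{\partial u}$ via an automorphism of $\BK[X]$ chosen to preserve $\mathfrak{m}$.

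The central difficulty is controlling the middle-degree derivations $\widetilde{D}_i$ for an arbitrary slice LND on $Y\times\BA^1$. This is closely tied to the Cancellation Problem and to the question of whether every slice $\BG_a$-action on $Y\times\BA^1$ is obtained, up to a controlled automorphism, from simpler data on $Y$. For this reason I expect that a full proof of the conjecture will require either additional geometric hypotheses on $Y$ or a genuinely new normalization theorem for LNDs with slices on cylinders.
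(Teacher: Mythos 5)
The statement you are addressing is stated in the paper as a \emph{conjecture}: the authors offer no proof of it, and explicitly say they know no example separating condition (1) from conditions (2) and (3). So there is no ``paper proof'' to match yours against; the honest comparison is between your partial progress and what the paper already establishes. Your treatment of (2)$\iff$(3) is correct and is essentially the computation in the proof of Proposition~\ref{ppp} (if $I$ is proper it sits in some $m_y$, every $\partial^k(f)$ lies in $I$, so $\exp(\partial)$ fixes $y$; conversely a stable point forces $\partial(f)(y)=0$ for all $f$). Your observation that (2)$\Rightarrow$(1) is exactly Proposition~\ref{pp} is also correct. So everything you actually prove is already in the paper, and the only new content would be (1)$\Rightarrow$(2), which you do not prove.

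That remaining implication is a genuine gap, and you correctly locate where your strategy breaks: decomposing $D\in\LND(X)$ by $u$-degree as $D=D_{-1}+D_0+\cdots+D_k$ with $D_i|_{\BK[Y]}=u^i\widetilde D_i$, Lemma~\ref{fl} only guarantees that the \emph{extreme} homogeneous components are locally nilpotent, so only $\widetilde D_k$ (and $\widetilde D_l$ when $l\ge0$) is known to preserve $m_y$; the intermediate $\widetilde D_i$ are arbitrary derivations and nothing forces them to fix $m_y$. Two further cautions about the route you sketch. First, Proposition~\ref{suf} as stated asks for invariance of the line under all of $\SAut(X)$, though its proof only uses $\LND^*(X)$; if you intend to exploit the slice hypothesis you should work with the weaker invariance, but then you must still handle arbitrary elements of $\LND^*(X)$, which is no easier. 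Second, even if one could show that every $\SAut(Y)$-stable point $y$ yields an $\SAut(X)$-invariant line, that would prove (3)$\Rightarrow$(1) in contrapositive form, closing the cycle --- but this is precisely the open content of the conjecture, and your proposed tools (a recursion from $D(s)=1$, or conjugating $D$ to $\partial/\partial u$ by an automorphism preserving $m_y\cdot\BK[X]$) are not developed far enough to see that they succeed; the second in particular is close in spirit to cancellation-type questions, as you note. In short: your write-up is a correct account of the known implications plus an honest identification of the open one, not a proof of the statement --- which is consistent with the paper, where the statement is a conjecture.
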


Note that for nonrigid toric varieties we have even more. A toric variety $X$ is of type $A$ if and only if $Y$ admits an LND $\partial$ with a slice. Then image of $\partial$ contains $1$, i.e. $I=\BK[Y]$. 

It is interesting question what is the explicit form of $\HD^*(X)$ in case when $Y$ is of type~B.
\begin{conjecture}
Let $I$ be the ideal of $\BK[Y]$ generated by all images of all LNDs on $Y$. Then 
$$\HD^*(Y)=\BK[Y]\oplus\bigoplus_{i>0}Iu^i.$$
\end{conjecture}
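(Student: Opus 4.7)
The plan is to establish both inclusions of the equality $\HD^*(X)=B$, where $X=Y\times\BA^1$, $\BK[X]=\BK[Y][u]$, and $B:=\BK[Y]\oplus\bigoplus_{i>0}Iu^i$. First I would check that $B$ is a subalgebra of $\BK[X]$: this follows from $\BK[Y]\cdot I\subseteq I$ and $I\cdot I\subseteq I$.

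The forward inclusion $B\subseteq\HD^*(X)$ is essentially already contained in the proof of Theorem~\ref{infgen}. Indeed, $\BK[Y]=\Ker(\partial/\partial u)\subseteq \HD^*(X)$, and for any LND $\delta$ of $Y$ extended to $X$ by $\delta(u)=0$, the derivation $u^i\delta$ is an LND of $X$ (in general without a slice). By Lemma~\ref{invlem} the subalgebra $\HD^*(X)$ is invariant under every LND of $X$, hence $(u^i\delta)(\BK[Y])=u^i\,\mathrm{Im}(\delta)\subseteq\HD^*(X)$, and therefore $Iu^i\subseteq\HD^*(X)$ for every $i\geq 1$. Closure under multiplication by $\BK[Y]\subseteq\HD^*(X)$ then gives $B\subseteq\HD^*(X)$.

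For the hard inclusion $\HD^*(X)\subseteq B$, Lemma~\ref{invlem} applied to the $u$-grading yields $\HD^*(X)=\bigoplus_{i\geq 0}J_iu^i$ with $J_i=\{f\in\BK[Y]\mid fu^i\in\HD^*(X)\}$, and the proof of Theorem~\ref{infgen} already provides $J_0=\BK[Y]$ and $J_i\supseteq I$ for $i\geq 1$. It remains to prove $J_i\subseteq I$ for all $i\geq 1$. My strategy is to introduce the reduction-mod-$I$ algebra homomorphism $\alpha\colon\BK[X]=\BK[Y][u]\to(\BK[Y]/I)[u]$ and to show $\alpha(\HD^*(X))\subseteq \BK[Y]/I$, i.e.\ the image consists only of elements constant in $u$. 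Granting this, if $f\in J_i$ for $i\geq 1$ then $\alpha(f)\,u^i=\alpha(fu^i)\in\BK[Y]/I$ forces $\alpha(f)=0$, whence $f\in I$. Because $\HD^*(X)$ is generated as an algebra by $\bigcup_{D\in\LND^*(X)}\Ker D$ and $\BK[Y]/I$ is a subalgebra of $(\BK[Y]/I)[u]$, it suffices to prove $\alpha(\Ker D)\subseteq \BK[Y]/I$ for every single $D\in\LND^*(X)$.

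To carry out this last step I would decompose $D=D_{-1}+D_0+\cdots+D_k$ into $u$-homogeneous parts. For $i\geq 0$ one has $D_i(f)=u^i\phi_i(f)$ for $f\in\BK[Y]$ and $D_i(u)=\psi_iu^{i+1}$ with a derivation $\phi_i$ of $\BK[Y]$ and some $\psi_i\in\BK[Y]$, while $D_{-1}=h\,\partial/\partial u$ with $h\in\BK[Y]$. By Lemma~\ref{fl} the extremal parts $D_{-1}$ and $D_k$ are themselves LNDs, and a direct analysis of $D_k$ shows that $\phi_k$ is an LND of $\BK[Y]$ (or vanishes), so $\phi_k(\BK[Y])\subseteq I$ by the very definition of $I$. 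The principal obstacle, and presumably the reason the result is stated only as a conjecture, lies in the intermediate components $\phi_i$ with $0\leq i<k$: these are derivations of $\BK[Y]$ but need not individually be LNDs, so it is not immediate that $\phi_i(\BK[Y])\subseteq I$. To close the argument I would attempt an inductive subtraction: once $D_k$ is shown to descend to zero on $(\BK[Y]/I)[u]$, the remainder $D-D_k$ is still an LND of $X$ with strictly smaller top $u$-degree, and iterating yields $\phi_i(\BK[Y])\subseteq I$ for every $i$. The crux is showing that the derivation induced by $D$ on the quotient $(\BK[Y]/I)[u]$ remains locally nilpotent, so that Lemma~\ref{fl} can be re-invoked on the quotient; establishing this descent of local nilpotence through $I$ is the principal technical hurdle of the whole approach.
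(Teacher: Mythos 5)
This statement is labelled a \emph{conjecture} in the paper: the authors give no proof of it, so there is nothing to compare your attempt against, and the only question is whether your proposal actually closes the problem. It does not. Your proof of the inclusion $\BK[Y]\oplus\bigoplus_{i>0}Iu^i\subseteq\HD^*(X)$ is correct and is indeed already contained in the proof of Theorem~\ref{infgen} (it is exactly the containment $I\subseteq J_i$ established there), and your reduction of the reverse inclusion to the single claim $\alpha(\Ker D)\subseteq\BK[Y]/I$ for each $D\in\LND^*(X)$ is a sensible plan. But the key step is left open --- you say so yourself --- and the device you propose to close it is based on a false intermediate claim.

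Concretely: the ``inductive subtraction'' requires that $D-D_k$ be locally nilpotent, and this fails in general. Lemma~\ref{fl} guarantees only that the \emph{extremal} homogeneous components of an LND are LNDs. If truncating the top component always preserved local nilpotency, then by downward induction every partial sum $D_{-1}+\dots+D_j$ would be an LND, and hence (taking extremal components again) every individual $D_i$ would be an LND --- which is precisely what is known to be false for intermediate components, and is the reason Lemma~\ref{fl} is restricted to the extremal ones. So the iteration cannot start. Independently, for the homomorphism $\alpha$ to interact with $D$ at all you need $D(I\cdot\BK[X])\subseteq I\cdot\BK[X]$; the ideal $I$ is $\Aut(Y)$-invariant, but an LND of $X$ need not preserve $\BK[Y]$ or $I\cdot\BK[X]$, and you give no argument for this invariance. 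Even granting it, $(\BK[Y]/I)[u]$ need not be a domain ($I$ need not be prime or even radical), so neither Lemma~\ref{fl} nor the standard structure theory of LNDs can simply be ``re-invoked on the quotient.'' What you have is a correct proof of one inclusion plus an outline of the other whose decisive step is both unproved and, as formulated, resting on an incorrect assertion; the conjecture remains open.
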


Let us recall that an affine irreducible variety $Y$ is called {\it rigid factor} if $Y\times \BA^1\cong Z\times\BA^1$ implies $Y\cong Z$. I.e. $Y$ is a rigid factor if it is not a conter-examle to generalized Zariski cancellation problem. All varieties that we know to be not rigid factors are of type A. It is well known that varieties of type C are rigid factors,  see~\cite[Theorem~2.24]{Fr}.  Therefore, we would like to state the following question.
\begin{question}
    Is there a variety of type B which is not a rigid factor?
\end{question}

\end{document}